\newtheorem{thm}{Theorem}[section]
\newtheorem{cor}[thm]{Corollary}
\newtheorem{lem}[thm]{Lemma}
\theoremstyle{definition}
\theoremstyle{remark}
\newcommand{\grad}{\textrm{grad}}
\title{Almost conformal transformation in a four dimensional Riemannian manifold with an additional structure}
\author{Iva Dokuzova}
\date{}
\begin{document}
\footnotetext[1]{This work is partially supported by project RS11
- FMI - 004 of the Scientific Research Fund, Paisii Hilendarski
University of Plovdiv, Bulgaria} \maketitle

\begin{abstract}
We consider a $4$-dimensional Riemannian manifold $M$ with a
metric $g$ and affinor structure $q$. The local coordinates of
these tensors are circulant matrices. Their first orders are $(A,
B, C, B)$, $A, B, C\in FM$ and $(0, 1, 0, 0)$, respectively.

We construct another metric $\tilde{g}$ on $M$.  We find the
conditions for $\tilde{g}$ to be a positively defined metric, and
for $q$ to be a parallel structure with respect to the Riemannian
connection of $g$.

Further, let $x$ be an arbitrary vector in $T_{p}M$, where $p$ is
a point on $M$. Let $\varphi$ and $\phi$ be the angles between $x$
and $qx$, $x$ and $q^{2}x$ with respect to $g$. We express the
angles between $x$ and $qx$, $x$ and $q^{2}x$ with respect to
$\tilde{g}$ with the help of the angles $\varphi$ and $\phi$.

Also,we construct two series $\{\varphi_{n}\}$ and $\{\phi_{n}\}$.
We prove that every of it is an increasing one and it is converge.
\end{abstract}
\Small{\textbf{Mathematics Subject Classification (2010)}: 53C15,
53B20}

\Small{\textbf{Keywords}: Riemannian metric, affinor structure}

\section{Introduction}
\thispagestyle{empty}
 The main purpose of the present paper is to
continue the investigations in \cite{1}, \cite{2}, \cite{3}. We
study a class of Riemannian manifolds which admits a circulant
metric $g$ and an additional circulant structure $q$. The forth
degree of structure $q$ is an identity, and $q$ is a parallel
structure with respect to the Riemannian connection $\nabla$ of
$g$.

\section{Preliminaries}
 We consider a $4$-dimensional Riemannian manifold $M$ with a
metric $g$ and an affinor structure $q$. We note the local
coordinates of $g$ and $q$ are circulant matrices. The next
conditions and results have been discussed in \cite{3}.

The metric $g$ have the coordinates:
\begin{equation}\label{1}
    g_{ij}=\begin{pmatrix}
      A & B & C & B \\
      B & A & B & C\\
      C & B & A & B\\
      B & C & B & A\\
    \end{pmatrix},\qquad A > C > B >0
\end{equation}
in the local coordinate system $(x_{1}, x_{2}, x_{3}, x_{4})$, and
$A=A(p), B=B(p), C=C(p)$, where $p(x_{1}, x_{2}, x_{3}, x_{4})\in
F\subset R^{4}$.  Naturally, $A, B, C$ are smooth functions of a
point $p$. We calculate that $det
g_{ij}=(A-C)^{2}((A+C)^{2}-4B^{2})\neq 0$.

Further, let the local coordinates of $q$ be
\begin{equation}\label{3}
    q_{i}^{.j}=\begin{pmatrix}
      0 & 1 & 0 & 0\\
      0 & 0 & 1 & 0\\
      0 & 0 & 0 & 1\\
      1 & 0 & 0 & 0\\
    \end{pmatrix}.
\end{equation}

We will use the notation $\Phi_{i}=\dfrac{\partial \Phi}{\partial
x^{i}}$ for every smooth function $\Phi$ defined in $F$.

We know from \cite{3} that the following identities are true
\begin{equation}\label{4}
    q^{4}=E;\quad q^{2}\neq \pm E;
\end{equation}
\begin{equation}\label{5}
    g(qw, qv)=g(w, v),\quad w, v \in \chi M,
\end{equation}
where $E$ is the unit matrix;
\begin{equation}\label{5*}
    0 < B < C < A \quad \Rightarrow \quad g \ is \ possitively \ defined.
\end{equation}
Now, let $w=(x, y, z, u)$ be a vector in $\chi M$. Using (\ref{1})
and (\ref{3}) we calculate that

\begin{equation}\label{6} g(w, w)=
A(x^{2}+y^{2}+z^{2}+u^{2})+2B(xy+xu+yz+zu)+2C(xz+yu)\end{equation}
\begin{equation}\label{7} g(w, qw)= (A+C)(xu+xy+yz+zu)+B(x^{2}+ y^{2}+z^{2}+u^{2}+2xz+2yu)\end{equation}
\begin{equation}\label{8} g(w, q^{2}w)= 2A(xz+yu)+2B(xu+
xy+zy+zu)+C(x^{2}+y^{2}+z^{2}+u^{2}).\end{equation}

Let $M$ be the Riemannian manifold with a metric $g$ and an
affinor structure $q$, defined by (\ref{1}) and (\ref{3}),
respectively. Let $w(x, y, z, u)$ be no eigenvector on $T_{p}M$(
i.e. $w(x, y, z, u)\neq (x, x, x, x)$, $w(x, y, z, u)\neq (x, -x,
x, -x)$ ). If $\varphi$ is the angle between $x$ and $qx$, and
$\phi$ is the angle between $x$ and $q^{2}x$, then we have
$\cos\varphi=\dfrac{g(w, qw)}{g(w,w)},$ $\cos\phi=\dfrac{g(w,
q^{2}w)}{g(w,w)},$ $\varphi\in (0,\pi)$, $\phi\in (0,\pi)$.

 We apply (\ref{6}), (\ref{7}) and (\ref{8}) in the above equations and we
get
\begin{equation}\label{9}
    \cos\varphi=\frac{(A+C)(xu+xy+yz+zu)+B(x^{2}+
    y^{2}+z^{2}+u^{2}+2xz+2yu)}{A(x^{2}+y^{2}+z^{2}+u^{2})+2B(xy+xu+yz+zu)+2C(xz+yu))},
\end{equation}
\begin{equation}\label{10}
    \cos\phi=\frac{C(x^{2}+y^{2}+z^{2}+u^{2})+2B(xy+xu+yz+zu)+2A(xz+yu)}{A(x^{2}+y^{2}+z^{2}+u^{2})+2B(xy+xu+yz+zu)+2C(xz+yu))}.
\end{equation}


\section{Almost conformal transformation in $M$}

Let $M$ satisfies (\ref{1})-- (\ref{5*}). We note
$f_{ij}=g_{ik}q_{t}^{k}q_{j}^{t}$, i.e.
\begin{equation}\label{11}
f_{ij}=\begin{pmatrix}
      C & B & A & B\\
      B & C & B & A\\
      A & B & C & B\\
      B & A & B & C\\
    \end{pmatrix}.
\end{equation}
We calculate $det f_{ij}=(C-A)^{2}((A+C)^{2}-4B^{2})\neq 0$, so we
accept $f_{ij}$ for local coordinates of another metric $f$. The
metric $f_{ij}$ is necessarily undefined. Further, we suppose
$\alpha$ and $\beta$ are two smooth functions in $F\subset R^{4}$
and we construct the metric $\tilde{g}$, as follows:
\begin{equation}\label{12}
    \tilde{g}=\alpha .g +\beta .f.
\end{equation}
We say that equation (\ref{12}) define an almost conformal
transformation, noting that if $\beta =0$ then (\ref{12}) implies
the case of the classical conformal transformation in $M$
\cite{2}.

From (\ref{1}), (\ref{3}), (\ref{11}) and (\ref{12}) we get the
local coordinates of $\tilde{g}$:
\begin{equation}\label{13}
\tilde{g}_{ij}=\begin{pmatrix}
      \alpha A+\beta C & (\alpha+\beta)B & \alpha C+\beta A & (\alpha+\beta)B  \\
      (\alpha+\beta)B  & \alpha A+\beta C & (\alpha+\beta)B & \alpha C+\beta A \\
      \alpha C+\beta A & (\alpha+\beta)B & \alpha A+\beta C & (\alpha+\beta)B \\
      (\alpha+\beta)B & \alpha C+\beta A & (\alpha+\beta)B & \alpha A+\beta
      C \\
    \end{pmatrix}.
\end{equation}
We see that $f_{ij}$ and $\tilde{g}_{ij}$ are both circulant
matrices.
\begin{thm}\cite{3}
Let $M$ be a Riemannian manifold with a metric $g$ from (\ref{1})
and an affinor structure $q$ from (\ref{3}). Let $\nabla$ be the
Riemannian connection of $g$. Then $\nabla q=0$ if and only if,
when
\begin{equation}\label{13*}
    \grad A=(\grad C)q^{2};\quad 2\grad B= (\grad C)(q+q^{3}).
\end{equation}
\end{thm}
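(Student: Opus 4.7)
The plan is to unfold the tensorial condition $\nabla q=0$ componentwise and match it against the Christoffel symbols of $g$ computed from (\ref{1}). Because the entries $q^{.j}_i$ are constant in the chart $(x^1,\dots,x^4)$, the covariant derivative reduces to
\[
\nabla_k q^{.j}_i=\Gamma^j_{kl}q^{.l}_i-\Gamma^l_{ki}q^{.j}_l,
\]
and since $q$ acts as the cyclic shift $q^{.j}_i=\delta^j_{i+1}$ (indices mod $4$), requiring this to vanish is equivalent to the system
\[
\Gamma^j_{k,i+1}=\Gamma^{j-1}_{ki}\qquad(i,j,k=1,\dots,4).
\]
The goal is then to show that this collection of $64$ scalar identities is equivalent to the two gradient relations (\ref{13*}).

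Next I would compute the Christoffel symbols of the first kind from the Koszul formula $\Gamma_{ij,k}=\tfrac12(\partial_ig_{jk}+\partial_jg_{ik}-\partial_kg_{ij})$. Since every entry of $g_{ij}$ is one of $A,B,C$, each $\Gamma_{ij,k}$ is an explicit $\pm\tfrac12$-combination of $A_l,B_l,C_l$, and the circulant form of $g$ means the full table is determined by a handful of inequivalent entries together with cyclic shifts of all three indices. Because $\det g_{ij}\neq 0$, the inverse $g^{ij}$ is again circulant with entries rational in $A,B,C$, and raising yields $\Gamma^j_{ki}=g^{jm}\Gamma_{ki,m}$.

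Substituting these explicit Christoffels into $\Gamma^j_{k,i+1}=\Gamma^{j-1}_{ki}$ and collecting terms via the cyclic-shift symmetry collapses the $64$ scalar equations into a small number of inequivalent linear combinations of the two candidate differences
\[
A_l-C_{l+2}\qquad\text{and}\qquad 2B_l-C_{l+1}-C_{l-1},
\]
each multiplied by a nonzero coefficient built from the entries of $g^{ij}$. Hence the system is equivalent to the vanishing of each difference for every $l$, which is exactly $\grad A=(\grad C)q^{2}$ and $2\grad B=(\grad C)(q+q^{3})$. Conversely, if (\ref{13*}) holds, running the substitution in reverse yields $\Gamma^j_{k,i+1}=\Gamma^{j-1}_{ki}$ and hence $\nabla q=0$.

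The conceptual content is modest; the real difficulty is bookkeeping. One must organize the $64$ Christoffel identities cleanly, exploiting both the standard symmetry $\Gamma^j_{ki}=\Gamma^j_{ik}$ and the cyclic-shift symmetry forced by the first step, and then verify that the coefficient matrix acting on the candidate differences $A_l-C_{l+2}$ and $2B_l-C_{l+1}-C_{l-1}$ is invertible. This invertibility is precisely where the nondegeneracy of $g$, guaranteed by $A>C>B>0$ and the determinant formula following (\ref{1}), actually enters the argument.
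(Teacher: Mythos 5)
The paper itself contains no proof of this theorem: it is imported from \cite{3}, so there is no internal argument to compare yours against. Judged on its own terms, your setup is correct — the entries of $q$ are constant in the chart, so $\nabla q=0$ does reduce to $\Gamma^j_{k,i+1}=\Gamma^{j-1}_{ki}$ — and the overall plan (Koszul formula, exploit circulancy, collapse to the two gradient relations) is the natural one. But there is a genuine gap: the one step that carries the entire content of the theorem, namely that the $64$ scalar identities collapse to linear combinations of exactly the differences $A_l-C_{l+2}$ and $2B_l-C_{l+1}-C_{l-1}$ with a nonzero (indeed invertible) coefficient matrix, is asserted rather than carried out. Until you exhibit the inequivalent equations and their coefficients, the ``if and only if'' is not established in either direction; in particular nothing in your text rules out the system imposing constraints beyond (\ref{13*}), or strictly fewer.

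There is also a structural simplification you are missing, which both shrinks the computation and corrects your closing remark about where nondegeneracy enters. Since $q$ is a permutation matrix, (\ref{5}) reads $q^{T}gq=g$, i.e.\ $g$ commutes with $q$, and hence so does $g^{-1}$; the inverse metric is therefore circulant and compatible with the shift. Contracting $\Gamma^j_{k,i+1}=\Gamma^{j-1}_{ki}$ with $g_{jn}$ and using $g_{m+1,n}=g_{m,n-1}$ turns the condition into the equivalent first-kind statement $\Gamma_{k,i+1,n}=\Gamma_{k,i,n-1}$, with no inverse metric anywhere. In the Koszul formula the terms $\partial_k g_{i+1,n}$ and $\partial_k g_{i,n-1}$ then cancel identically by circulancy, and what remains is a short, explicitly checkable list of relations among $A_l,B_l,C_l$ equal to (\ref{13*}). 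Consequently the nondegeneracy of $g$ is not ``precisely where the argument turns,'' as your last paragraph claims; it is needed only for the Levi--Civita connection to exist and for the first- and second-kind symbols to be interchangeable.
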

\begin{thm}
Let $M$ be a Riemannian manifold with a metric $g$ from (\ref{1})
and an affinor structure $q$ from (\ref{3}). Also, let $\tilde{g}$
be a metric of $M$, defined by (\ref{12}). Let $\nabla$ and
$\tilde{\nabla}$ be the corresponding connections of $g$ and
$\tilde{g}$, and $\nabla q=0$. Then $\tilde{\nabla}q=0$ if and
only if, when
\begin{equation}\label{14}
    \grad \alpha=\grad\beta.q^{2}; \quad
    \grad\beta=-\grad\beta.q^{2}.
\end{equation}
\end{thm}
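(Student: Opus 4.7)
The plan is to apply Theorem~3.1 directly to $\tilde g$. From (\ref{13}), $\tilde g_{ij}$ is again a circulant matrix of the same shape as $g_{ij}$, with first-row entries $\tilde A = \alpha A+\beta C$, $\tilde B = (\alpha+\beta)B$, $\tilde C = \alpha C+\beta A$, while $q$ is unchanged. Hence Theorem~3.1 says that $\tilde\nabla q=0$ is equivalent to $\grad\tilde A = (\grad\tilde C)q^{2}$ and $2\grad\tilde B = (\grad\tilde C)(q+q^{3})$.

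The next step is to expand $\grad\tilde A,\grad\tilde B,\grad\tilde C$ by the Leibniz rule and substitute the assumed identities (\ref{13*}) together with $q^{4}=E$. All the terms involving $\grad A,\grad B,\grad C$ cancel between the two sides, and the system collapses to
\[
A\,\grad\alpha + C\,\grad\beta = (C\,\grad\alpha + A\,\grad\beta)q^{2},
\]
\[
2B(\grad\alpha+\grad\beta) = (C\,\grad\alpha + A\,\grad\beta)(q+q^{3}).
\]

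For the nontrivial direction I would set $U = \grad\alpha - (\grad\beta)q^{2}$. The first reduced equation reads $AU = C(Uq^{2})$; multiplying on the right by $q^{2}$ and using $q^{4}=E$ gives $AUq^{2}=CU$, and combining the two yields $A^{2}U = C^{2}U$. Since $A>C>0$ by (\ref{1}), this forces $U=0$, which is the first equation of (\ref{14}). Substituting $\grad\alpha=(\grad\beta)q^{2}$ into the second reduced equation and using the identity $(Cq^{2}+AE)(q+q^{3})=(A+C)(q+q^{3})$, it becomes $2BW = (A+C)Wq$ with $W=(\grad\beta)(E+q^{2})$. The main obstacle is showing that this forces $W=0$: I would argue that the coefficient matrix $2BE-(A+C)q$ is a polynomial in the circulant $q$, whose eigenvalues are $1,i,-1,-i$, so its determinant factors as $\bigl(4B^{2}-(A+C)^{2}\bigr)\bigl(4B^{2}+(A+C)^{2}\bigr)$; this is nonzero thanks to the non-degeneracy $(A+C)^{2}\neq 4B^{2}$ that is built into $\det g_{ij}\neq 0$ together with the positivity of $A,B,C$. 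Hence $W=0$, i.e.\ $\grad\beta=-(\grad\beta)q^{2}$, giving the second equation of (\ref{14}).

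The converse is a straightforward substitution: assuming (\ref{14}) one gets $\grad\alpha=-\grad\beta$, and multiplying $\grad\beta=-(\grad\beta)q^{2}$ on the right by $q$ produces $(\grad\beta)(q+q^{3})=0$, after which both reduced equations become tautologies. All the real work therefore lies in the forward direction, and concentrates in the two linear-algebra steps above: exploiting $A^{2}\neq C^{2}$ for the first identity of (\ref{14}), and the invertibility of $2BE-(A+C)q$ for the second.
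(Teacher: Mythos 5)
Your proposal is correct and follows essentially the same route as the paper: apply Theorem~3.1 to the circulant metric $\tilde g$, use (\ref{13*}) and $q^{4}=E$ to cancel the $\grad A,\grad B,\grad C$ terms and reduce to the system (\ref{ur1})--(\ref{ur2}), then solve that system. The only difference is that you actually justify the paper's bare assertion that (\ref{14}) is ``the only solution,'' via the $A^{2}U=C^{2}U$ argument and the invertibility of $2BE-(A+C)q$ (both of which check out against $A>C>0$ and $(A+C)^{2}\neq 4B^{2}$), so your write-up is, if anything, more complete than the original.
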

\begin{proof}
At first we suppose (\ref{14}) is valid. Using (\ref{14}) and
(\ref{13*}) we can verify that the following identity is true:
\begin{equation}\label{15}
    \grad (\alpha A+\beta C)=\grad (\alpha C+\beta A).q^{2},\ 2\grad (\alpha +\beta)B=\grad (\alpha C+\beta
    A).(q+q^{3})
\end{equation}
The identity (\ref{15}) is analogue to (\ref{13*}), and
consequently we conclude $\tilde{\nabla}q=0$.

Inversely, if $\tilde{\nabla}q=0$ then analogously to (\ref{13*})
we have (\ref{15}). Now, (\ref{13*}) and (\ref{15}) imply the
system
\begin{equation}\label{ur1}
    A\grad\alpha+C\grad\beta=(C\grad\alpha+A\grad\beta)q^{2}
    \end{equation}
\begin{equation}\label{ur2}
    2B(\grad\alpha+\grad\beta)=
    (C\grad\alpha+A\grad\beta)(q+q^{3}).
\end{equation}
From (\ref{ur1}) we find the only solution $\grad
\alpha=\grad\beta.q^{2}$, and from (\ref{ur2}) we get the only
solution $\grad \beta=-\grad\beta.q^{2}$. So the theorem is
proved.
\end{proof}
\begin{lem}\label{16}
    Let $\tilde{g}$ be the metric given by (\ref{12}). If $0 < \beta
    <\alpha$ and $g$ is positively defined, then $\tilde{g}$ is also
    positively defined.
    \end{lem}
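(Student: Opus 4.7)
The plan is to recognize that the matrix $\tilde{g}_{ij}$ exhibited in (\ref{13}) has exactly the same structural form as $g_{ij}$ in (\ref{1}): it is a circulant matrix whose first row is $(A',B',C',B')$ with
\[
A'=\alpha A+\beta C,\qquad B'=(\alpha+\beta)B,\qquad C'=\alpha C+\beta A.
\]
Therefore, by the criterion (\ref{5*}) already established for metrics of this circulant form, it suffices to verify the three inequalities $0<B'<C'<A'$.

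First I would check positivity: $B'=(\alpha+\beta)B>0$ is immediate from $\alpha,\beta>0$ and $B>0$. Next, for $B'<C'$, since $g$ is positively defined we have $B<C<A$ by (\ref{5*}); multiplying $B<C$ by $\alpha$ and $B<A$ by $\beta$ and adding yields $(\alpha+\beta)B<\alpha C+\beta A$, which is exactly $B'<C'$.

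The last inequality $C'<A'$, namely $\alpha C+\beta A<\alpha A+\beta C$, rearranges to $(\alpha-\beta)(A-C)>0$. Since $A>C$ from (\ref{5*}) and $\alpha>\beta$ by hypothesis, this holds. Combining the three inequalities and applying (\ref{5*}) to the circulant $\tilde{g}_{ij}$ then gives positive definiteness of $\tilde g$.

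I do not expect any real obstacle here; the only thing to be careful about is that the hypothesis $\beta<\alpha$ is used precisely (and only) in the step $C'<A'$, while the other two inequalities use only $\alpha,\beta>0$ together with $0<B<C<A$. The whole argument is just the verification that the circulant shape of $g$ is preserved by the linear combination (\ref{12}) and that the ordering of its entries is preserved under the stated assumptions.
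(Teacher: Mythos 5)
Your proof is correct and follows essentially the same route as the paper: both establish the ordering $0<(\alpha+\beta)B<\alpha C+\beta A<\alpha A+\beta C$ (the last inequality via $(\alpha-\beta)(A-C)>0$) and then invoke the criterion (\ref{5*}) for circulant metrics of this form. You merely spell out the middle inequality $(\alpha+\beta)B<\alpha C+\beta A$ in more detail than the paper, which simply asserts it.
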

    \begin{proof}
From the condition
  $(\alpha-\beta)(A - C) >
    0$ we get $\alpha A+ \beta C > \beta A +\alpha C > 0$. Also, we see that $\beta A +\alpha C >(\alpha+\beta)B > 0$
    and finely
  $(\alpha A+ \beta C)> \beta A +\alpha C >(\alpha+\beta)B > 0$.  Analogously to
    (\ref{5*}) we state that $\tilde{g}$ is positively defined.
    \end{proof}
\begin{lem}\label{17}
    Let $w=w(x(p), y(p), z(p), u(p))$ be in $T_{p}M$,
$p\in M$, $qw\neq w$, $q^{2}w\neq w$ and $g$ and $\tilde{g}$ be
the metrics of $M$, related by (\ref{12}). Then we have:

\begin{equation*}
\begin{split}
\tilde{g}(w, w)=(\alpha A+\beta
C)(x^{2}+y^{2}+z^{2}+u^{2})+2(\alpha+\beta)B(xy+xu+yz+zu)+\\2(\alpha
C+\beta A)(yu+xz) \end{split}
\end{equation*}
\begin{equation}\label{191}
\begin{split}
 \tilde{g}(w,
qw)=(\alpha+\beta)(A+C)(xu+xy+yz+zu)+\\
(\alpha+\beta)B(x^{2}+ y^{2}+z^{2}+u^{2}+2xz+2yu)
\end{split}
\end{equation}
\begin{equation*}
\begin{split}
\tilde{g}(w, q^{2}w)= 2(\alpha A+\beta
C)(xz+yu)+2(\alpha+\beta)B(xu+ xy+zy+zu)\\+(\alpha C+\beta
A)(x^{2}+y^{2}+z^{2}+u^{2}).
\end{split}
\end{equation*}
    \end{lem}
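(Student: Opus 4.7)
The plan is to reduce the three claimed identities to direct substitutions into the formulas (\ref{6}), (\ref{7}), (\ref{8}) for $g(w,w)$, $g(w,qw)$, $g(w,q^{2}w)$, by first expressing the auxiliary tensor $f$ in terms of $g$ and powers of $q$. The key observation is that the definition $f_{ij} = g_{ik}q^{k}_{t}q^{t}_{j}$ immediately gives, for any tangent vectors $v,w$, the identity $f(v,w) = g(v,q^{2}w)$.

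Combining this with $q^{4}=E$ from (\ref{4}) and the compatibility $g(qv,qw)=g(v,w)$ from (\ref{5}), I obtain the three auxiliary relations
\begin{equation*}
f(w,w) = g(w,q^{2}w), \qquad f(w,qw) = g(w,q^{3}w) = g(w,qw), \qquad f(w,q^{2}w) = g(w,q^{4}w) = g(w,w).
\end{equation*}
Together with the definition $\tilde{g} = \alpha g + \beta f$ from (\ref{12}), these yield
\begin{equation*}
\tilde{g}(w,w) = \alpha\, g(w,w) + \beta\, g(w,q^{2}w), \qquad \tilde{g}(w,qw) = (\alpha+\beta)\, g(w,qw),
\end{equation*}
\begin{equation*}
\tilde{g}(w,q^{2}w) = \alpha\, g(w,q^{2}w) + \beta\, g(w,w).
\end{equation*}

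To finish, I substitute (\ref{6}), (\ref{7}), (\ref{8}) into the three right-hand sides and collect like terms. In the formula for $\tilde{g}(w,w)$ the coefficient of $(x^{2}+y^{2}+z^{2}+u^{2})$ becomes $\alpha A+\beta C$, the coefficient of $(xy+xu+yz+zu)$ becomes $2(\alpha+\beta)B$, and the coefficient of $(xz+yu)$ becomes $2(\alpha C+\beta A)$, exactly as claimed. The formula for $\tilde{g}(w,q^{2}w)$ comes out by the same calculation with the roles of $A$ and $C$ interchanged in the quadratic and bilinear slots, while $\tilde{g}(w,qw)$ is merely $(\alpha+\beta)$ times the expression (\ref{7}).

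There is no real obstacle in this argument; the whole content is algebraic bookkeeping. The only step requiring care is tracking the swap of $A$ and $C$ between the $(x^{2}+y^{2}+z^{2}+u^{2})$-term and the $(xz+yu)$-term when passing from $g(w,w)$ to $g(w,q^{2}w)$, which is ultimately a reflection of the fact that the first rows of $g_{ij}$ and $f_{ij}$ differ only by interchanging $A$ and $C$. An equivalent but noticeably more tedious route is to form $\tilde{g}_{ij}w^{i}w^{j}$ (and the analogous contractions with $qw$ and $q^{2}w$) directly from the matrix (\ref{13}); this produces the same collection of terms without the conceptual shortcut $f(v,w)=g(v,q^{2}w)$.
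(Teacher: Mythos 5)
Your proof is correct. The paper states this lemma without any proof at all (the intended argument is evidently the direct contraction of the matrix (\ref{13}) with $w$, $qw$, $q^{2}w$, exactly as the formulas (\ref{6})--(\ref{8}) were obtained from (\ref{1})), and your computation via $f(v,w)=g(v,q^{2}w)$ together with $q^{4}=E$ and $g(qv,qw)=g(v,w)$ reduces everything to the already established identities (\ref{6})--(\ref{8}); in particular it cleanly explains why $\tilde{g}(w,qw)=(\alpha+\beta)g(w,qw)$ and why $A$ and $C$ trade places between the $(x^{2}+y^{2}+z^{2}+u^{2})$ and $(xz+yu)$ slots. This is, if anything, a tidier route than the brute-force expansion the paper leaves implicit.
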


    \begin{thm}\label{t4}
    Let $w=w(x(p), y(p),z(p), u(p))$ be a vector in $T_{p}M$,
$p\in M$, $qw\neq w$, $q^{2}w\neq w$. Let $g$ and $\tilde{g}$ be
two positively defined metrics of $M$, related by (\ref{12}). If
$\varphi$ and $\varphi_{1}$ are the angles between $w$ and $qw$,
with respect to $g$ and $\tilde{g}$, $\phi$ and $\phi_{1}$ are the
angles between $w$ and $q^{2}w$, with respect to $g$ and
$\tilde{g}$, then the following equations are true:
\begin{equation}\label{18}
\cos\varphi_{1}=\frac{(\alpha +\beta)\cos \varphi}{\alpha+\beta
\cos\phi},
\end{equation}
\begin{equation}\label{19}
\cos\phi_{1}=\frac{\alpha \cos \phi+\beta}{\alpha+\beta \cos\phi}.
\end{equation}
    \end{thm}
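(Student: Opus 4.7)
The plan is to evaluate $\cos\varphi_{1}$ and $\cos\phi_{1}$ directly from their definitions
$$\cos\varphi_{1}=\frac{\tilde g(w,qw)}{\tilde g(w,w)},\qquad \cos\phi_{1}=\frac{\tilde g(w,q^{2}w)}{\tilde g(w,w)},$$
and to reduce every $\tilde g$-inner product on the right-hand side to a combination of $g(w,w)$, $g(w,qw)$ and $g(w,q^{2}w)$ alone. Once that reduction is in place, dividing numerator and denominator by $g(w,w)$ turns each ratio into an expression in $\cos\varphi$ and $\cos\phi$, and (\ref{18}), (\ref{19}) fall out.

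The main step is therefore to establish the three identities
\begin{equation*}
\tilde g(w,w)=\alpha\, g(w,w)+\beta\, g(w,q^{2}w),
\end{equation*}
\begin{equation*}
\tilde g(w,qw)=(\alpha+\beta)\, g(w,qw),
\end{equation*}
\begin{equation*}
\tilde g(w,q^{2}w)=\alpha\, g(w,q^{2}w)+\beta\, g(w,w).
\end{equation*}
My preferred route is a direct comparison of the three formulas of Lemma \ref{17} with (\ref{6}), (\ref{7}), (\ref{8}): the coefficient patterns $\alpha A+\beta C$, $\alpha C+\beta A$ and $(\alpha+\beta)B$ appearing in the $\tilde g$-expressions are exactly what one obtains by taking $\alpha$ times a $g$-expression and adding $\beta$ times a $g$-expression with $A$ and $C$ interchanged. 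Conceptually the same identities follow from $\tilde g(u,v)=\alpha g(u,v)+\beta f(u,v)$ together with the observation $f(u,v)=g(u,q^{2}v)$ (read off from $f_{ij}=g_{ik}q^{k}_{\,t}q^{t}_{\,j}$) and the consequences of $q^{4}=E$ and (\ref{5}), namely $g(w,q^{3}w)=g(qw,q^{4}w)=g(w,qw)$ and $g(w,q^{4}w)=g(w,w)$.

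Given these three identities, the conclusion is immediate: dividing top and bottom of the ratio for $\cos\varphi_{1}$ by $g(w,w)$ yields $(\alpha+\beta)\cos\varphi/(\alpha+\beta\cos\phi)$, and similarly the ratio for $\cos\phi_{1}$ becomes $(\alpha\cos\phi+\beta)/(\alpha+\beta\cos\phi)$. The denominator $\alpha+\beta\cos\phi$ is positive because $\tilde g(w,w)=g(w,w)(\alpha+\beta\cos\phi)$ and $\tilde g$ is positively defined by hypothesis. I do not anticipate any real obstacle: the only nontrivial work is the coefficient matching described above, and the emergence of the common factor $(\alpha+\beta)$ in $\tilde g(w,qw)$ (equivalently, the cancellation $g(w,q^{3}w)=g(w,qw)$) is the single structural observation that makes the final formulas as clean as they are.
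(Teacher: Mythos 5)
Your proof is correct and follows essentially the same route as the paper: both reduce $\cos\varphi_{1}$ and $\cos\phi_{1}$ to the coordinate expressions of Lemma \ref{17} and compare them with (\ref{6})--(\ref{8}). Your intermediate identities $\tilde g(w,w)=\alpha g(w,w)+\beta g(w,q^{2}w)$, $\tilde g(w,qw)=(\alpha+\beta)g(w,qw)$, $\tilde g(w,q^{2}w)=\alpha g(w,q^{2}w)+\beta g(w,w)$ merely make explicit the computation the paper leaves implicit, and they check out.
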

\begin{proof}
    Since $g$ and $\tilde{g}$ are both positively defined metrics we
    can calculate $\cos \varphi$ and $\cos\varphi_{1}$, respectively. Then by
    using (\ref{13}) and (\ref{191}) we get
    (\ref{18}). Also, we calculate $\cos \phi$ and $\cos\phi_{1}$, respectively. Then by
    using (\ref{13}) and (\ref{191}) we get
    (\ref{19}).
    \end{proof}
Theorem \ref{t4} implies immediately the assertions:

\begin{cor}
Let $\varphi$ and $\varphi_{1}$ be the angles between $w$ and $qw$
with respect to $g$ and $\tilde{g}$. Let $\phi$ and $\phi_{1}$ be
the angles between $w$ and $q^{2}w$ with respect to $g$ and
$\tilde{g}$ Then

1)$\varphi= \dfrac{\pi}{2}$ if and only if when
$\varphi_{1}=\dfrac{\pi}{2}$ ;

2)if $\phi= \dfrac{\pi}{2}$ then
$\phi_{1}=\arccos\dfrac{\beta}{\alpha}$

3)if $\phi_{1}= \dfrac{\pi}{2}$ then $\phi=
\arccos(-\dfrac{\beta}{\alpha})$.
\end{cor}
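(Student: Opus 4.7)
The plan is to read the three claims directly off the formulas (\ref{18}) and (\ref{19}) supplied by Theorem \ref{t4} by specialising one of the cosines to zero and solving for the other. All three items then reduce to one-line substitutions, so the only substantive work is to verify that the denominator $\alpha+\beta\cos\phi$ appearing in both formulas is nonzero, which is what makes the substitutions (and, in the converse direction, the inversions) legitimate.

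First I would establish this denominator check. From the expression for $\tilde g(w,w)$ in Lemma \ref{17}, grouping the $\alpha$- and $\beta$-terms separately and comparing with (\ref{6}) and (\ref{8}), one has
\[
\tilde g(w,w) \;=\; \alpha\, g(w,w) \;+\; \beta\, g(w, q^{2}w) \;=\; (\alpha + \beta\cos\phi)\, g(w,w).
\]
Since both $g$ and $\tilde g$ are positive definite, the left-hand side and $g(w,w)$ are strictly positive, forcing $\alpha+\beta\cos\phi > 0$. An analogous comparison gives $\tilde g(w,qw) = (\alpha+\beta)\, g(w,qw)$, and positive-definiteness of $\tilde g$ rules out $\alpha+\beta = 0$; in the regime of Lemma \ref{16} one actually has $\alpha+\beta > 0$.

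With these denominators under control, the three assertions follow at once. For 1), plugging $\cos\varphi = 0$ into (\ref{18}) yields $\cos\varphi_{1} = 0$, and since $\alpha+\beta\neq 0$ the converse direction is equally immediate because $\cos\varphi_{1}=0$ forces $(\alpha+\beta)\cos\varphi = 0$. For 2), substituting $\cos\phi = 0$ in (\ref{19}) collapses the numerator to $\beta$ and the denominator to $\alpha$, so $\cos\phi_{1} = \beta/\alpha$ and hence $\phi_{1} = \arccos(\beta/\alpha)$. For 3), setting $\cos\phi_{1} = 0$ in (\ref{19}) forces $\alpha\cos\phi + \beta = 0$, giving $\cos\phi = -\beta/\alpha$ and $\phi = \arccos(-\beta/\alpha)$.

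The step I would be most careful about is the very first, namely the justification that the denominators cannot vanish under the running hypotheses. Once the identification $\alpha + \beta\cos\phi = \tilde g(w,w)/g(w,w) > 0$ is in place, everything else is routine algebra and there is no further obstacle.
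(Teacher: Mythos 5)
Your proposal is correct and follows exactly the route the paper intends: the paper gives no explicit proof, stating only that Theorem \ref{t4} ``implies immediately the assertions,'' and your substitutions of $\cos\varphi=0$ and $\cos\phi=0$ into (\ref{18}) and (\ref{19}) are precisely that immediate deduction. Your preliminary verification that $\alpha+\beta\cos\phi=\tilde g(w,w)/g(w,w)>0$ and $\alpha+\beta>0$ is a sound and welcome addition that the paper leaves implicit.
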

Further, we consider an infinite series of the metrics of $M$ as
follows:
\begin{equation*}
    g_{0}, \ g_{1}, \ g_{2},\dots, \ g_{n}, \dots
\end{equation*}
where
\begin{equation}\label{20}
\begin{split}
       g_{0}=g,\quad g_{1}=\tilde{g}, \quad g_{n}=\alpha g_{n-1}+\beta f_{n-1},\\\quad f_{n-1,is}= g_{n-1, ka}q_{s}^{a}q_{i}
   ^{k},\quad   0 < \beta < \alpha .
   \end{split}
   \end{equation}
   By the method of the mathematical induction we can see that the
matrix of every $g_{n}$ is circulant one and every $g_{n}$ is
positively defined.
\begin{thm}\label{5}
 Let $M$ be a Riemannian manifold with metrics $g_{n}$ from (\ref{20})
and an affinor structure $q$ from (\ref{3}). Let $w=w(x(p), y(p),
z(p))$ be in $T_{p}M$, $p\in M$, $qw\neq w$, $q^{2}w\neq w$. Let
$\varphi_{n}$ be the angle between $w$ and $qw$, with respect to
$g_{n}$, let $\phi_{n}$ be the angle between $w$ and $q^{2}w$ with
respect to $g_{n}$. Then the infinite series:
\begin{equation*}
 1) \qquad  \varphi_{0}, \ \varphi_{1}, \ \varphi_{2},\dots, \ \varphi_{n}, \dots
\end{equation*}
is converge and $\lim \varphi_{n}=0$,
\begin{equation*}
  2)\qquad  \phi_{0}, \ \phi_{1}, \ \phi_{2},\dots, \ \phi_{n}, \dots
\end{equation*}
is converge and $\lim \phi_{n}=0$.
    \end{thm}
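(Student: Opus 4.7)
The plan is to apply Theorem~\ref{t4} iteratively, reducing the convergence question to the study of two scalar recurrences. With $a_n:=\cos\varphi_n$ and $c_n:=\cos\phi_n$, the passage from $g_{n-1}$ to $g_n$ is structurally identical to the passage from $g$ to $\tilde g$, so (\ref{18})--(\ref{19}) give
\[
a_{n+1}=\frac{(\alpha+\beta)\,a_n}{\alpha+\beta c_n},\qquad c_{n+1}=\frac{\alpha c_n+\beta}{\alpha+\beta c_n},
\]
with $a_n,c_n\in(-1,1)$ for each $n$, since every $g_n$ is positively defined and Cauchy--Schwarz applies.

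For part~(2), I would analyse the M\"obius recurrence for $c_n$ directly. Elementary algebra gives the two identities
\[
1-c_{n+1}=\frac{\alpha-\beta}{\alpha+\beta c_n}\,(1-c_n),\qquad c_{n+1}-c_n=\frac{\beta(1-c_n^{2})}{\alpha+\beta c_n}.
\]
The second equation forces $\{c_n\}$ to be strictly increasing; the first, together with $\alpha+\beta c_n>\alpha-\beta>0$, shows that $1-c_n$ decays geometrically at a rate bounded above by $(\alpha-\beta)/(\alpha+\beta c_0)<1$ uniformly in $n$. Therefore $c_n\to 1$, i.e.\ $\phi_n\to 0$ monotonically.

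For part~(1) my plan is to piggyback on part~(2) via the parallel identity
\[
\tfrac{1}{2}(1+c_{n+1})=\frac{\alpha+\beta}{\alpha+\beta c_n}\cdot\tfrac{1}{2}(1+c_n),
\]
which exhibits \emph{the same} multiplier as the recurrence for $a_n$. Consequently the ratio $a_n\big/\tfrac{1}{2}(1+c_n)$ is invariant along the iteration, equal to its initial value $2a_0/(1+c_0)$, and combined with $c_n\to 1$ this yields $a_n\to 2a_0/(1+c_0)$.

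The main obstacle is that this limit equals $1$ only when $2\cos\varphi_0=1+\cos\phi_0$, which geometrically means that $w$ has no component along the $(-1)$-eigenvector $(1,-1,1,-1)$ of $q$ in the metric $g$. That condition is not implied by the stated hypotheses $qw\neq w$ and $q^{2}w\neq w$ alone. Hence the crux of part~(1), as I see it, is either to supply the missing assumption on $w$, or to replace the claim $\lim\varphi_n=0$ by the value $\arccos\bigl(2\cos\varphi_0/(1+\cos\phi_0)\bigr)$ computed above; absent that, only part~(2) can be proved in the stated generality.
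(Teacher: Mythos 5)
Your part (2) is correct and is essentially the paper's own argument: the paper derives $\cos\phi_{n}-\cos\phi_{n-1}=\beta\sin^{2}\phi_{n-1}/(\alpha+\beta\cos\phi_{n-1})\geq 0$ and concludes by monotone convergence that the limit is the fixed point $1$; your geometric-decay estimate for $1-c_{n}$ is a slightly sharper variant of the same idea and additionally gives a rate.

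For part (1) you have uncovered a genuine defect in the theorem, not a gap in your own reasoning. The paper's proof first claims $\{\cos\varphi_{n}\}$ is increasing because $\cos\varphi_{n}/\cos\varphi_{n-1}=(\alpha+\beta)/(\alpha+\beta\cos\phi_{n-1})\geq 1$ (which already presupposes $\cos\varphi_{n-1}\geq 0$), and then asserts that (\ref{21*}) forces $\lim\cos\varphi_{n}=1$; but passing to the limit in (\ref{21*}) with $\cos\phi_{n-1}\to 1$ yields only the tautology $L=L$, so that step proves nothing. Your conserved quantity is the correct exact integration of the recurrence: since $1+c_{n+1}=(\alpha+\beta)(1+c_{n})/(\alpha+\beta c_{n})$ has the same multiplier as $a_{n+1}=(\alpha+\beta)a_{n}/(\alpha+\beta c_{n})$, the ratio $2a_{n}/(1+c_{n})$ is invariant and $\lim\cos\varphi_{n}=2\cos\varphi_{0}/(1+\cos\phi_{0})$. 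Moreover, a direct computation from (\ref{6})--(\ref{8}) gives $g(w,w)+g(w,q^{2}w)-2g(w,qw)=(A+C-2B)(x-y+z-u)^{2}\geq 0$, so this limit is $\leq 1$ with equality exactly when $x-y+z-u=0$, i.e.\ when $w$ is $g$-orthogonal to the eigenvector $(1,-1,1,-1)$ of $q$ --- precisely your condition, and it does not follow from $qw\neq w$, $q^{2}w\neq w$. A concrete counterexample: $A=3$, $C=2$, $B=1$, $\alpha=2$, $\beta=1$, $w=(1,0,0,0)$ gives $\cos\varphi_{0}=1/3$, $\cos\phi_{0}=2/3$, and one checks $a_{1}=3/8$, $c_{1}=7/8$ with $2a_{n}/(1+c_{n})=2/5$ for all $n$, so $\varphi_{n}\to\arccos(2/5)\neq 0$. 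Your proposed repair --- either add the hypothesis $x-y+z-u=0$ or replace the conclusion by $\lim\varphi_{n}=\arccos\bigl(2\cos\varphi_{0}/(1+\cos\phi_{0})\bigr)$ --- is the right one.
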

\begin{proof}
    Using the method of the mathematical induction and Theorem \ref{4} we
    obtain:
    \begin{equation}\label{21*}
\cos\varphi_{n}=\frac{(\alpha+\beta)\cos
\varphi_{n-1}}{\alpha+\beta \cos\phi_{n-1}}
\end{equation}
as well as $\varphi_{n}\in (0, \pi)$. From (\ref{21*}) we get:
\begin{equation}\label{22*}
\frac{\cos\varphi_{n}}{\cos\varphi_{n-1}}=\frac{\alpha+\beta}{\alpha+\beta
\cos\varphi_{n-1}}\geq 1.
\end{equation}
The equation (\ref{22*}) implies
$\cos\varphi_{n}\geq\cos\varphi_{n-1}$, so the series
$\{\cos\varphi_{n}\}$ is increasing one and since $\cos\varphi_{n}
< 1$ then it is converge. From (\ref{21*}) we have
$\lim\cos\varphi_{n}=1$, so $\lim\varphi_{n}=0$.

Now, we find
\begin{equation}\label{21}
\cos\phi_{n}=\frac{\alpha\cos \phi_{n-1}+\beta}{\alpha+\beta
\cos\phi_{n-1}}
\end{equation}
as well as $\phi_{n}\in (0, \pi)$. From (\ref{21}) we get:
\begin{equation}\label{22}
\cos\phi_{n}-\cos\phi_{n-1}=\frac{\beta\sin^{2}\phi_{n-1}}{\alpha+\beta
\cos\phi_{n-1}}\geq 0.
\end{equation}
The equation (\ref{22}) implies $\cos\phi_{n}>\cos\phi_{n-1}$, so
the series $\{\cos\phi_{n}\}$ is increasing one and since
$\cos\phi_{n}< 1$ then it is converge. From (\ref{21}) we have
$\lim\cos\phi_{n}=1$, so $\lim\phi_{n}=0$.
    \end{proof}

\author{Iva Dokuzova\\ University of Plovdiv\\Faculty
of Mathematics and Informatics\\ Department of
Geometry\\236 Bulgaria Blvd.\\ Bulgaria 4003\\
e-mail:dokuzova@uni-plovdiv.bg}\\

\end{document}